\newtheorem{theorem}{Theorem}
\newtheorem{corollary}[theorem]{Corollary}
\newtheorem{proposition}[theorem]{Proposition}
\newtheorem{definition}[theorem]{Definition}
\newtheorem{example}[theorem]{Example}
\newtheorem{remark}[theorem]{Remark}
\begin{document}

\title{A Fractional Calculus on Arbitrary Time Scales:\\
Fractional Differentiation and Fractional Integration\thanks{Part of first author's Ph.D.,
which is carried out at Sidi Bel Abbes University, Algeria.} \thanks{This is a preprint of 
a paper whose final and definite form will appear in the international journal \emph{Signal Processing}, 
ISSN 0165-1684. Paper submitted 04/Jan/2014; revised 19/Apr/2014; accepted for publication 12/May/2014.}} 

\author{Nadia Benkhettou$^1$\\ \texttt{benkhettou$_{-}$na@yahoo.fr}
\and Artur M. C. Brito da Cruz$^{2, 3}$\\ \texttt{artur.cruz@estsetubal.ips.pt}
\and Delfim F. M. Torres$^3$\thanks{Corresponding author.
Tel: +351 234370668; Fax: +351 234370066;  Email: delfim@ua.pt}\\ \texttt{delfim@ua.pt}}

\date{$^1$Laboratoire de Math\'{e}matiques, Universit\'{e} de Sidi Bel-Abb\`{e}s\\
B.P. 89, 22000 Sidi Bel-Abb\`{e}s, Algerie\\[0.3cm]
$^2$Escola Superior de Tecnologia de Set\'{u}bal\\
Estefanilha, 2910-761 Set\'{u}bal, Portugal\\[0.3cm]
$^3$\text{Center for Research and Development in Mathematics and Applications (CIDMA)}\\
Department of Mathematics, University of Aveiro, 3810-193 Aveiro, Portugal}

\maketitle


\begin{abstract}
We introduce a general notion of fractional (noninteger) derivative
for functions defined on arbitrary time scales.
The basic tools for the time-scale fractional calculus
(fractional differentiation and fractional integration)
are then developed. As particular cases, one obtains
the usual time-scale Hilger derivative
when the order of differentiation is one, and a local
approach to fractional calculus when the time scale
is chosen to be the set of real numbers.

\bigskip

\noindent \textbf{Keywords:} fractional differentiation,
fractional integration, calculus on time scales.

\bigskip

\noindent \textbf{2010 Mathematics Subject Classification:} 26A33, 26E70.
\end{abstract}


\section{Introduction}

Fractional calculus refers to differentiation and integration
of an arbitrary (noninteger) order. The theory goes back to
mathematicians as Leibniz (1646--1716), Liouville (1809--1882),
Riemann (1826--1866), Letnikov (1837--1888), and Gr\"{u}nwald (1838--1920)
\cite{book:Kilbas,book:Samko}. During the last two decades, fractional calculus
has increasingly attracted the attention of researchers of many different fields
\cite{book:Benchohra,MR2870885,Baleanu:Nigmatullin,TM:K:M,book:FCV,MR2090004,book:Ortigueira,Yang:Baleanu:etal}.

Several definitions of fractional derivatives/integrals have
been defined in the literature, including those of Riemann--Liouville,
Gr\"{u}nwald--Letnikov, Hadamard, Riesz, Weyl and Caputo
\cite{book:Kilbas,Ortigueira:Trujillo:2012,book:Samko}.
In 1996, Kolwankar and Gangal proposed a local fractional derivative operator that applies
to highly irregular and nowhere differentiable
Weierstrass functions \cite{MR1911751,K:G:96}.
Here we introduce the notion of fractional derivative
on an arbitrary time scale $\mathbb{T}$
(cf. Definition~\ref{def:fd:ts}). In the particular case
$\mathbb{T} = \mathbb{R}$, one gets the local Kolwankar--Gangal
fractional derivative
$\lim_{h \rightarrow 0} \frac{f(t+h) - f(t)}{h^\alpha}$,
which has been considered in \cite{K:G:96,K:G:97} as the point
of departure for fractional calculus. One of the motivations
to consider such local fractional derivatives is
the possibility to deal with irregular signals,
so common in applications of signal processing \cite{K:G:97}.

A time scale is a model of time.
The calculus on time scales was initiated by Aulbach and Hilger in 1988
\cite{MR1062633}, in order to unify and generalize continuous
and discrete analysis \cite{H2,H1}.
It has a tremendous potential for applications and has recently
received much attention \cite{ABRP,BP,BP1,CK,MyID:252}.
The idea to join the two subjects --- the fractional calculus and
the calculus on time scales --- and to develop a
\emph{Fractional Calculus on Time Scales},
was born with the Ph.D. thesis of Bastos \cite{PhD:Nuno}.
See also \cite{MR2933070,PhD:Auch,Nuno:Z,Nuno:hZ,Nuno:Lap,Kisela,Rib:Ant,PhD:Williams}
and references therein. Here we introduce a general fractional calculus
on time scales and develop some of its basic properties.

Fractional calculus is of increasing importance in signal processing \cite{book:Ortigueira}.
This can be explained by several factors, such as the presence of internal noises
in the structural definition of the signals. Our fractional derivative
depends on the graininess function of the time scale.
We trust that this possibility can be very useful in applications of signal processing,
providing a concept of coarse-graining in time that can be used to model white noise
that occurs in signal processing or to obtain generalized entropies and new practical
meanings in signal processing. Indeed, let $\mathbb{T}$ be a time scale
(continuous time $\mathbb{T} = \mathbb{R}$, discrete time $\mathbb{T} = h \mathbb{Z}$, $h > 0$,
or, more generally, any closed subset of the real numbers, like the Cantor set).
Our results provide a mathematical framework to deal with functions/signals
$f(t)$ in signal processing that are not differentiable in the time scale,
that is, signals $f(t)$ for which the equality $\Delta f(t) = f^\Delta(t) \Delta t$ does not hold.
More precisely, we are able to model signal processes for which
$\Delta f(t) = f^{(\alpha)}(t) (\Delta t)^\alpha$,
$0 < \alpha \le 1$.

The time-scale calculus can be used to unify discrete and continuous
approaches to signal processing in one unique setting. Interesting in applications,
is the possibility to deal with more complex time domains. One extreme case, covered
by the theory of time scales and surprisingly relevant also for the process of signals,
appears when one fix the time scale to be the Cantor set
\cite{Baleanu:TM:etal,Yang:Srivastava:etal}.
The application of the local fractional derivative
in a time scale different from the classical time scales
$\mathbb{T} = \mathbb{R}$ and $\mathbb{T} = h \mathbb{Z}$ was proposed by
Kolwankar and Gangal themselves: see \cite{K:G:97,K:G:98}
where nondifferentiable signals defined on the Cantor set are considered.

The article is organized as follows. In Section~\ref{sec:prelim}
we recall the main concepts and tools necessary in the sequel.
Our results are given in Section~\ref{sec:MR}:
in Section~\ref{sub:sec:FD} the notion
of fractional derivative for functions defined
on arbitrary time scales is introduced
and the respective fractional
differential calculus developed;
the notion of fractional integral on time scales, and some of its
basic properties, is investigated in Section~\ref{sub:sec:FI}.
We end with Section~\ref{sec:Conc} of conclusions and future work.


\section{Preliminaries}
\label{sec:prelim}

A time scale $ \mathbb{T}$ is an arbitrary
nonempty closed subset of $ \mathbb{R}$.
Here we only recall the necessary concepts of the calculus
on time scales. The reader interested on the subject is referred
to the books \cite{BP,BP1}. For a good survey see \cite{ABRP}.

\begin{definition}
\label{def:jump:oper}
Let $\mathbb{T}$ be a time scale. For $t \in \mathbb{T}$
we define the forward jump operator
$\sigma:\mathbb{T}\rightarrow \mathbb{T}$ by
$\sigma(t):=\inf\{s \in\mathbb{T} : s > t\}$,
and the backward jump operator
$\rho:\mathbb{T}\rightarrow \mathbb{T}$ by
$\rho(t):=\sup\{s \in\mathbb{T} : s < t\}$.
\end{definition}

\begin{remark}
In Definition~\ref{def:jump:oper}, we put $\inf \emptyset =\sup \mathbb{T}$
(i.e., $\sigma(t)= t$) if $\mathbb{T}$ has a maximum $t$,
and $\sup \emptyset =\inf \mathbb{T}$ (i.e., $\rho(t)= t$)
if $\mathbb{T}$ has a minimum $t$, where $\emptyset$ denotes
the empty set.
\end{remark}

If $\sigma(t) > t$, then we say that $t$ is right-scattered;
if $\rho(t) < t$, then $t$ is said to be left-scattered.
Points that are simultaneously right-scattered and left-scattered
are called isolated. If $t < \sup\mathbb{T}$ and $\sigma(t) = t$,
then $t$ is called right-dense; if $t >\inf \mathbb{T}$ and $\rho(t)= t$,
then $t$ is called left-dense. The graininess function
$\mu :\mathbb{T}\rightarrow [0,\infty)$ is defined by
$\mu(t) :=\sigma(t) - t$.

We make use of the set $\mathbb{T}^{\kappa}$,
which is derived from the time scale $\mathbb{T}$ as follows: if $\mathbb{T}$
has a left-scattered maximum $M$, then $\mathbb{T}^{\kappa}=\mathbb{T} \setminus \{M\}$;
otherwise, $\mathbb{T}^{\kappa}=\mathbb{T}$.

\begin{definition}[Delta derivative \cite{AB}]
Assume $f:\mathbb{T}\rightarrow \mathbb{R}$ and let
$t\in \mathbb{T}^{\kappa}$. We define
$$
f^{\Delta}(t)=\lim_{s\rightarrow t}\frac{f(\sigma(s))-f(t)}{\sigma(s)-t},
\quad t \neq \sigma(s),
$$
provided the limit exists. We call $f^{\Delta}(t)$ the delta derivative
(or Hilger derivative) of $f$ at $t$. Moreover, we say that $f$
is delta differentiable on $\mathbb{T}^{\kappa}$ provided
$f^{\Delta}(t)$ exists for all $t\in \mathbb{T}^{\kappa}$. The function
$f^{\Delta}:\mathbb{T}^{\kappa}\rightarrow \mathbb{R}$ is then called
the delta derivative of $f$ on $\mathbb{T}^{\kappa}$.
\end{definition}

Delta derivatives of higher-order are defined in
the usual way. Let $r\in\mathbb{N}$,
$\mathbb{T}^{\kappa^{0}} := \mathbb{T}$, and
$\mathbb{T}^{\kappa^i}:=\left(\mathbb{T}^{\kappa^{i-1}}\right)^\kappa$,
$i = 1, \ldots, r$. For convenience we also put $f^{\Delta^0} = f$
and $f^{\Delta^1} = f^\Delta$. The $r$th-delta derivative
$f^{\Delta^r}$ is given by $f^{\Delta^r} =
\left(f^{\Delta^{r-1}}\right)^\Delta: \mathbb{T}^{\kappa^r} \rightarrow \mathbb{R}$
provided $f^{\Delta^{r-1}}$ is delta differentiable.

The following notions will be useful in connection with
the fractional integral (Section~\ref{sub:sec:FI}).

\begin{definition}
A function $f:\mathbb{T} \rightarrow \mathbb{R}$ is called regulated
provided its right-sided limit exist (finite) at all right-dense points
in $\mathbb{T}$ and its left-sided limits exist (finite)
at all left-dense points in $\mathbb{T}$.
\end{definition}

\begin{definition}
A function $f:\mathbb{T}\rightarrow \mathbb{R}$ is called rd-continuous provided
it is continuous at right-dense points in $\mathbb{T} $ and its left-sided limits
exist (finite) at left-dense points in $\mathbb{T}$. The set of rd-continuous
functions $f:\mathbb{T}\rightarrow \mathbb{R}$ is denoted by $\mathcal{C}_{rd}$.
\end{definition}


\section{Main Results}
\label{sec:MR}

We develop the basic tools of any fractional calculus:
fractional differentiation (Section~\ref{sub:sec:FD})
and fractional integration (Section~\ref{sub:sec:FI}).
Let $\mathbb{T}$ be a time scale, $t\in \mathbb{T}$,
and $\delta >0$. We define the left $\delta$-neighborhood of $t$
as $\mathcal{U}^{-} :=\left] t-\delta ,t\right[ \cap \mathbb{T}$.


\subsection{Fractional Differentiation}
\label{sub:sec:FD}

We begin by introducing a new notion:
the fractional derivative of order $\alpha \in ]0,1]$
for functions defined on arbitrary time scales.
For $\alpha = 1$ we obtain the usual delta derivative
of the time-scale calculus.

\begin{definition}
\label{def:fd:ts}
Let $f:\mathbb{T}\rightarrow \mathbb{R}$, $t\in \mathbb{T}^{\kappa }$,
and $\alpha \in ]0,1]$. For $\alpha \in ]0,1]\cap \left\{ 1/q
: q \text{ is a odd number}\right\}$ (resp. $\alpha \in ]0,1] \setminus
\left\{ 1/q : q\text{ is a odd number}\right\}$) we define
$f^{(\alpha )}(t)$ to be the number (provided it exists) with the property
that, given any $\epsilon >0$, there is a $\delta$-neighborhood $\mathcal{U}\subset
\mathbb{T}$ of $t$ (resp. left $\delta$-neighborhood
$\mathcal{U}^{-}\subset \mathbb{T}$ of $t$), $\delta > 0$, such that
\begin{equation*}
\left \vert \left[ f(\sigma (t))-f(s)\right] -f^{(\alpha )}(t)\left[ \sigma
(t)-s\right] ^{\alpha }\right \vert \leq \epsilon \left \vert \sigma
(t)-s\right \vert^{\alpha}
\end{equation*}
for all $s\in \mathcal{U}$ (resp. $s\in \mathcal{U}^{-}$). We call
$f^{(\alpha )}(t)$ the fractional derivative of $f$ of order $\alpha $ at $t$.
\end{definition}

Along the text $\alpha$ is a real number in the interval $]0,1]$.
The next theorem provides some useful relationships
concerning the fractional derivative on time scales
introduced in Definition~\ref{def:fd:ts}.

\begin{theorem}
\label{T1}
Assume $f:\mathbb{T}\rightarrow \mathbb{R}$ and let
$t\in \mathbb{T}^{\kappa }$. The following properties hold:
\begin{description}
\item[(i)] Let $\alpha \in ]0,1]\cap \left\{ \frac{1}{q} : q\text{ is a odd
number}\right\} $. If $t$ is right-dense and if $f$ is fractional
differentiable of order $\alpha$ at $t$, then $f$ is continuous at $t$.

\item[(ii)] Let $\alpha \in ]0,1] \setminus \left\{\frac{1}{q} : q\text{ is
a odd number}\right\}$. If $t$ is right-dense and if $f$ is fractional
differentiable of order $\alpha$ at $t$, then $f$ is left-continuous at $t$.

\item[(iii)] If $f$ is continuous at $t$ and $t$ is right-scattered, then $f$
is fractional differentiable of order $\alpha$ at $t$ with
\begin{equation*}
f^{(\alpha )}(t)=\frac{f^{\sigma }(t)-f(t)}{(\mu (t))^{\alpha }}.
\end{equation*}

\item[(iv)] Let $\alpha \in ]0,1]\cap \left\{ \frac{1}{q} : q\text{ is a odd
number}\right\} $. If $t$ is right-dense, then $f$ is fractional
differentiable of order $\alpha$ at $t$ if, and only if, the limit
\begin{equation*}
\lim_{s\rightarrow t}\frac{f(t)-f(s)}{(t-s)^{\alpha}}
\end{equation*}
exists as a finite number. In this case,
\begin{equation*}
f^{(\alpha )}(t)=\lim_{s\rightarrow t}\frac{f(t)-f(s)}{(t-s)^{\alpha}}.
\end{equation*}

\item[(v)] Let $\alpha \in ]0,1] \setminus \left\{ \frac{1}{q} : q\text{ is a
odd number}\right\}$. If $t$ is right-dense, then $f$ is fractional
differentiable of order $\alpha$ at $t$ if, and only if, the limit
\begin{equation*}
\lim_{s\rightarrow t^{-}}\frac{f(t)-f(s)}{(t-s)^{\alpha}}
\end{equation*}
exists as a finite number. In this case,
\begin{equation*}
f^{(\alpha )}(t)=\lim_{s\rightarrow t^{-}}\frac{f(t)-f(s)}{(t-s)^{\alpha }}.
\end{equation*}

\item[(vi)] If $f$ is fractional differentiable of order $\alpha$ at $t$,
then $f(\sigma (t))=f(t)+(\mu (t))^{\alpha }f^{(\alpha )}(t)$.
\end{description}
\end{theorem}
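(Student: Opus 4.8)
The plan is to prove the six properties by carefully unwinding Definition~\ref{def:fd:ts}, treating the two cases (the ``odd reciprocal'' exponents, for which a full $\delta$-neighborhood is used, versus the generic exponents, for which only a left $\delta$-neighborhood is used) in parallel. The key observation driving everything is that the defining inequality $|[f(\sigma(t))-f(s)] - f^{(\alpha)}(t)[\sigma(t)-s]^\alpha| \le \epsilon |\sigma(t)-s|^\alpha$ forces $f(s)$ to be close to $f(\sigma(t)) - f^{(\alpha)}(t)[\sigma(t)-s]^\alpha$ whenever $s$ is near $t$, so estimates on $f(s)-f(t)$ follow by adding and subtracting and exploiting that $[\sigma(t)-s]^\alpha \to [\sigma(t)-t]^\alpha$.

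First I would prove the right-scattered case (iii) and the formula (vi), since these are the most direct. When $t$ is right-scattered, $\mu(t)=\sigma(t)-t>0$, so I may take $s=t$ in the defining inequality (once $\delta$ is small enough that $t\in\mathcal{U}$), giving $|[f(\sigma(t))-f(t)] - f^{(\alpha)}(t)\mu(t)^\alpha| \le \epsilon\,\mu(t)^\alpha$ for every $\epsilon>0$; letting $\epsilon\to0$ yields exactly $f^{(\alpha)}(t) = (f^\sigma(t)-f(t))/\mu(t)^\alpha$, which is (iii), and continuity of $f$ at $t$ makes the candidate value well defined. The same manipulation, run in reverse, gives (vi): rearranging the defining inequality with $s\to t$ shows $f(\sigma(t)) = f(t) + \mu(t)^\alpha f^{(\alpha)}(t)$ whenever the fractional derivative exists, regardless of whether $t$ is dense or scattered. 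For (vi) when $t$ is right-dense one has $\sigma(t)=t$ and $\mu(t)=0$, so the identity degenerates to $f(\sigma(t))=f(t)$, which holds trivially.

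Next I would handle the right-dense characterizations (iv) and (v) together, since for right-dense $t$ we have $\sigma(t)=t$, so the defining inequality collapses to $|[f(t)-f(s)] - f^{(\alpha)}(t)(t-s)^\alpha| \le \epsilon|t-s|^\alpha$. Dividing through by $|t-s|^\alpha$ shows directly that $f^{(\alpha)}(t)$ exists if and only if $\frac{f(t)-f(s)}{(t-s)^\alpha}$ converges, with the limit equal to $f^{(\alpha)}(t)$; the neighborhood is two-sided in the odd-reciprocal case (giving the limit $s\to t$ of (iv)) and one-sided in the generic case (giving $s\to t^-$ of (v)). The continuity statements (i) and (ii) then follow quickly: from the inequality, $|f(t)-f(s)| \le (|f^{(\alpha)}(t)|+\epsilon)|t-s|^\alpha$, and since $\alpha>0$ the right-hand side tends to $0$ as $s\to t$, forcing $f(s)\to f(t)$ (two-sided for (i), left-sided for (ii)).

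The main obstacle I anticipate is bookkeeping around $(t-s)^\alpha$ when $s>t$ in the two-sided (odd-reciprocal) case: for $s$ to the right of $t$ the quantity $t-s$ is negative, and $(t-s)^\alpha = (t-s)^{1/q}$ must be interpreted as a real $q$th root of a negative number, which is exactly why the definition restricts the two-sided neighborhood to odd $q$ — only then is the odd root real-valued and sign-carrying. I would need to verify that $|t-s|^\alpha$ and the real value $(t-s)^\alpha$ match in modulus so the estimates go through, and to confirm that for non-odd-reciprocal $\alpha$ the expression $(t-s)^\alpha$ is simply ill-defined for $s>t$, justifying the restriction to $\mathcal{U}^-$ in (ii) and (v). Once this sign/root convention is pinned down, the remaining arguments are routine $\epsilon$--$\delta$ manipulations.
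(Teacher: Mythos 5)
Your handling of (i), (ii), (iv) and (v) follows essentially the paper's own route (collapse $\sigma(t)=t$ at right-dense points, then pass between the defining inequality and the difference quotient by dividing or multiplying by $|t-s|^{\alpha}$), and your direct proof of (vi) by evaluating the defining inequality at $s=t$ is, in the case $\alpha=1/q$ with $q$ odd, actually cleaner than the paper's, which deduces (vi) from (iii) even though (iii) carries a continuity hypothesis that (vi) does not assume.

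The genuine gap is in (iii): you prove its converse, not the statement. Part (iii) asserts that if $f$ is continuous at a right-scattered $t$, then the fractional derivative \emph{exists} and equals $L:=\bigl(f^{\sigma}(t)-f(t)\bigr)/(\mu(t))^{\alpha}$. Your argument presupposes the defining inequality (that is, presupposes that $f^{(\alpha)}(t)$ exists), sets $s=t$, and concludes that the value must be $L$. That is only the uniqueness half; the remark that continuity ``makes the candidate value well defined'' does no work, since $L$ is well defined without it. The existence half --- the actual content of (iii), and the only place continuity enters --- requires exhibiting, for each $\epsilon>0$, a neighborhood on which
\begin{equation*}
\left\vert \left[ f^{\sigma}(t)-f(s)\right] - L\left[\sigma(t)-s\right]^{\alpha}\right\vert \leq \epsilon \left\vert \sigma(t)-s\right\vert^{\alpha}
\end{equation*}
holds for \emph{all} $s$, not just at $s=t$. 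The paper obtains this from continuity: as $s\to t$ one has $f(s)\to f(t)$ and $\sigma(t)-s\to\mu(t)>0$, hence $\left[f^{\sigma}(t)-f(s)\right]/(\sigma(t)-s)^{\alpha}\to L$, and multiplying the resulting $\epsilon$-bound by $\vert\sigma(t)-s\vert^{\alpha}$ gives exactly the defining inequality with $f^{(\alpha)}(t)=L$. There is also a second, smaller defect in your $s=t$ device: when $\alpha\notin\left\{1/q : q \text{ odd}\right\}$ the definition uses the \emph{left} neighborhood $\mathcal{U}^{-}=\left] t-\delta ,t\right[ \cap \mathbb{T}$, which does not contain $t$, so substituting $s=t$ is not permitted; in that case both (iii) and your argument for (vi) must instead run through the limit $s\to t^{-}$, which again requires controlling $f(s)$ for all $s$ near $t$.
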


\begin{proof}
$(i)$ Assume that $f$ is fractional differentiable at $t$. Then, there
exists a neighborhood $\mathcal{U}$ of $t$ such that
\begin{equation*}
\left \vert \left[ f(\sigma (t))-f(s)\right] -f^{(\alpha )}(t)\left[ \sigma
(t)-s\right] ^{\alpha }\right \vert \leq \epsilon \left \vert \sigma
(t)-s\right \vert^{\alpha}
\end{equation*}
for $s\in \mathcal{U}$. Therefore, for all $s \in \mathcal{U} \cap
\left]t-\epsilon ,t+\epsilon \right[$,
\begin{multline*}
\left \vert f\left( t\right) -f\left( s\right) \right \vert  \leq \left\vert
\left[ f^{\sigma }(t)-f(s)\right] -f^{(\alpha )}(t)\left[ \sigma (t)
-s\right]^{\alpha}\right \vert\\
+\left \vert \left[ f^{\sigma }(t)-f(t)\right]
-f^{(\alpha )}(t)\left[ \sigma (t)-t\right] ^{\alpha }\right \vert
+\left \vert f^{(\alpha )}(t)\right \vert \left \vert \left[ \sigma (t)-s
\right] ^{\alpha }-\left[ \sigma (t)-t\right] ^{\alpha }\right \vert
\end{multline*}
and, since $t$ is a right-dense point,
\begin{equation*}
\begin{split}
\left \vert f\left( t\right) -f\left( s\right) \right \vert
&\leq \left \vert
\left[ f^{\sigma }(t)-f(s)\right] -f^{(\alpha )}(t)\left[ \sigma (t)
-s\right]^{\alpha }\right \vert +\left \vert f^{(\alpha )}(t)\left[
t-s\right]^{\alpha}\right\vert \\
&\leq \epsilon \left \vert t-s\right \vert^{\alpha}
+\left \vert f^{(\alpha)}(t)\right \vert \left \vert t-s\right \vert^{\alpha}\\
&\leq \epsilon ^{\alpha }\left[ \epsilon +\left\vert f^{(\alpha)}(t)
\right\vert \right].
\end{split}
\end{equation*}
It follows the continuity of $f$ at $t$.

$(ii)$ The proof is similar to the proof of $(i)$,
where instead of considering the neighborhood $\mathcal{U}$ of $t$
we consider a left neighborhood $\mathcal{U}^{-}$ of $t$.

$(iii)$ Assume that $f$ is continuous at $t$ and $t$ is right-scattered.
By continuity,
\begin{equation*}
\lim_{s\rightarrow t}\frac{f^{\sigma }(t)-f(s)}{(\sigma (t)-s)^{\alpha }}
=\frac{f^{\sigma }(t)-f(t)}{(\sigma (t)-t)^{\alpha}}
=\frac{f^{\sigma}(t)-f(t)}{(\mu (t))^{\alpha }}.
\end{equation*}
Hence, given $\epsilon >0$ and $\alpha \in ]0,1] \cap \left\{ 1/q : q
\text{ is a odd number}\right\}$, there is a neighborhood $\mathcal{U}$ of
$t$ (or $\mathcal{U}^{-}$ if $\alpha \in ]0,1] \setminus \left\{1/q
: q\text{ is a odd number}\right\}$) such that
\begin{equation*}
\left \vert \frac{f^{\sigma }(t)-f(s)}{(\sigma (t)-s)^{\alpha }}
-\frac{f^{\sigma }(t)-f(t)}{(\mu (t))^{\alpha }}\right \vert
\leq \epsilon
\end{equation*}
for all $s\in \mathcal{U}$ (resp. $\mathcal{U}^{-}$). It follows that
\begin{equation*}
\left \vert \left[ f^{\sigma }(t)-f(s)\right] -\frac{f^{\sigma }(t)-f(t)}{
(\mu (t))^{\alpha }}(\sigma (t)-s)^{\alpha }\right \vert \leq \epsilon
|\sigma (t)-s|^{\alpha}
\end{equation*}
for all $s\in \mathcal{U}$ (resp. $\mathcal{U}^{-}$). Hence, we get the
desired result:
\begin{equation*}
f^{(\alpha )}(t)=\frac{f^{\sigma }(t)-f(t)}{(\mu (t))^{\alpha}}.
\end{equation*}

$(iv)$ Assume that $f$ is fractional differentiable of order $\alpha $ at $t$
and $t$ is right-dense. Let $\epsilon > 0$ be given. Since $f$ is fractional
differentiable of order $\alpha $ at $t$, there is a neighborhood
$\mathcal{U}$ of $t$ such that
\begin{equation*}
\left \vert \lbrack f^{\sigma }(t)-f(s)]-f^{(\alpha )}(t)(\sigma
(t)-s)^{\alpha }\right \vert \leq \epsilon |\sigma (t)-s|^{\alpha}
\end{equation*}
for all $s\in \mathcal{U}$. Since $\sigma (t)=t$,
\begin{equation*}
\left \vert \lbrack f(t)-f(s)]-f^{(\alpha )}(t)(t-s)^{\alpha }\right \vert
\leq \epsilon |t-s|^{\alpha}
\end{equation*}
for all $s\in \mathcal{U}$. It follows that
\begin{equation*}
\left \vert \frac{f(t)-f(s)}{(t-s)^{\alpha }}-f^{(\alpha )}(t)\right \vert
\leq \epsilon
\end{equation*}
for all $s\in \mathcal{U}$, $s\neq t$. Therefore, we get the desired result:
\begin{equation*}
f^{(\alpha )}(t)=\lim_{s\rightarrow t}\frac{f(t)-f(s)}{(t-s)^{\alpha }}.
\end{equation*}
Now assume that
\begin{equation*}
\lim_{s\rightarrow t}\frac{f(t)-f(s)}{(t-s)^{\alpha}}
\end{equation*}
exists and is equal to $L$ and $t$ is right-dense.
Then, there exists $\mathcal{U}$ such that
\begin{equation*}
\left \vert \frac{f(t)-f(s)}{(t-s)^{\alpha }}-L\right \vert \leq \epsilon
\end{equation*}
for all $s\in \mathcal{U}$. Because $t$ is right-dense,
\begin{equation*}
\left \vert \frac{f^{\sigma }(t)-f(s)}{(\sigma \left( t\right) -s)^{\alpha}}
-L\right \vert \leq \epsilon.
\end{equation*}
Therefore,
\begin{equation*}
\left \vert \left[ f^{\sigma }(t)-f(s)\right]
-L\left( \sigma (t)-s\right)^{\alpha }\right\vert
\leq \epsilon |\sigma \left( t\right) -s|^{\alpha},
\end{equation*}
which lead us to the conclusion that $f$ is fractional differentiable of
order $\alpha $ at $t$ and $f^{(\alpha )}(t)=L$.

$(v)$ The proof is similar to the
proof of $(iv)$, where instead of considering the neighborhood
$\mathcal{U}$ of $t$ we consider a left-neighborhood $\mathcal{U}^{-}$ of $t$.

$(vi)$ If $\sigma (t)=t$, then $\mu (t)=0$ and
\begin{equation*}
f^{\sigma }(t))=f(t)=f(t)+(\mu (t))^{\alpha }f^{(\alpha )}(t).
\end{equation*}
On the other hand, if $\sigma (t)>t$, then by $(iii)$
\begin{equation*}
f^{\sigma }(t)=f(t)+(\mu (t))^{\alpha }\cdot \frac{f^{\sigma }(t)-f(t)}{(\mu
(t))^{\alpha }}=f(t)+(\mu (t))^{\alpha }f^{(\alpha )}(t).
\end{equation*}
The proof is complete.
\end{proof}

\begin{remark}
In a time scale $\mathbb{T}$, due to the inherited topology of
the real numbers, a function $f$ is always continuous at any isolated point $t$.
\end{remark}

\begin{proposition}
\label{E1:i}
If $f:\mathbb{T}\rightarrow \mathbb{R}$ is defined by $f(t)= c$
for all $t\in\mathbb{T}$, $c\in \mathbb{R}$,
then $f^{(\alpha)}(t)\equiv 0$.
\end{proposition}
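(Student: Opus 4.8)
The plan is to verify directly that the number $0$ satisfies the defining inequality of Definition~\ref{def:fd:ts}, from which it follows immediately that $f^{(\alpha)}(t) = 0$. First I would observe that since $f$ is constant, $f^{\sigma}(t) = c$ and $f(s) = c$ for every admissible $s$, so that the bracket $[f(\sigma(t)) - f(s)]$ vanishes identically.

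Next I would substitute the candidate value $f^{(\alpha)}(t) = 0$ into the left-hand side of the defining inequality, obtaining
\[
\left\vert \left[ f(\sigma(t)) - f(s) \right] - 0 \cdot \left[ \sigma(t) - s \right]^{\alpha} \right\vert = 0.
\]
Since $0 \le \epsilon \left\vert \sigma(t) - s \right\vert^{\alpha}$ holds for every $\epsilon > 0$ and every $s$, the inequality is satisfied on any neighborhood of $t$ whatsoever --- be it the full $\delta$-neighborhood $\mathcal{U}$ (in the case $\alpha = 1/q$ with $q$ odd) or the left $\delta$-neighborhood $\mathcal{U}^{-}$ (otherwise). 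Hence $0$ has precisely the property required of $f^{(\alpha)}(t)$, and by Definition~\ref{def:fd:ts} we conclude $f^{(\alpha)}(t) = 0$ for every $t \in \mathbb{T}^{\kappa}$.

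As an alternative route, one could invoke Theorem~\ref{T1}: a constant function is continuous, so at a right-scattered point part (iii) gives $f^{(\alpha)}(t) = (f^{\sigma}(t) - f(t))/(\mu(t))^{\alpha} = (c - c)/(\mu(t))^{\alpha} = 0$, while at a right-dense point parts (iv) and (v) reduce the fractional derivative to the limit of $(f(t) - f(s))/(t-s)^{\alpha}$, which is identically $0$. Both branches again yield the value $0$, consistent with the direct computation.

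There is no genuine obstacle here: the argument is essentially a one-line substitution. The only point deserving a word of care is to record that the vanishing of the left-hand side makes the distinction between the two neighborhood types in Definition~\ref{def:fd:ts} irrelevant, so the single candidate $0$ works uniformly across all the cases the definition distinguishes.
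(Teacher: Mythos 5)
Your proof is correct, and your primary argument takes a genuinely different (and more elementary) route than the paper. The paper's own proof is exactly your ``alternative route'': it splits into cases according to whether $t$ is right-scattered or right-dense, invoking Theorem~\ref{T1}~(iii) in the first case and Theorem~\ref{T1}~(iv)--(v) in the second. Your main argument instead verifies the $\epsilon$-inequality of Definition~\ref{def:fd:ts} directly: since $f(\sigma(t)) - f(s) = c - c = 0$, the candidate value $0$ makes the left-hand side vanish identically, so the inequality holds on every neighborhood, of either type, for every $\epsilon$. This buys uniformity --- no case distinction on the point $t$, and no dependence on whether $\alpha$ is of the form $1/q$ with $q$ odd --- and it keeps the proposition self-contained rather than resting on the heavier Theorem~\ref{T1}. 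What the paper's route buys in exchange is economy of means once Theorem~\ref{T1} is in hand (the computation reduces to two one-line evaluations) and an illustration of how that theorem is meant to be used in computations, a pattern the paper repeats in Proposition~\ref{E1:ii} and later examples. Both arguments implicitly rely on the fractional derivative being unique when it exists (the definition speaks of ``the number''), a point the paper itself never establishes, so you are not held to a higher standard there.
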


\begin{proof}
If $t$ is right-scattered, then, by Theorem~\ref{T1} (iii), one has
$$
f^{(\alpha)}(t)=\frac{f(\sigma(t))-f(t)}{(\mu(t))^{\alpha}}
=\frac{c-c}{(\mu(t))^{\alpha}}=0.
$$
Assume $t$ is right-dense. Then,
by Theorem~\ref{T1} (iv) and (v), it follows that
$$
f^{(\alpha)}(t) = \lim_{s \rightarrow t}\frac{c-c}{(t-s)^{\alpha}} = 0.
$$
This concludes the proof.
\end{proof}

\begin{proposition}
\label{E1:ii}
If $f:\mathbb{T}\rightarrow \mathbb{R}$ is defined by $f(t)=t$
for all $t\in \mathbb{T}$, then
\begin{equation*}
f^{(\alpha )}(t)
=
\begin{cases}
(\mu (t))^{1-\alpha } & \textrm{ if } \alpha \neq 1, \\
1 & \textrm{ if } \alpha =1.
\end{cases}
\end{equation*}
\end{proposition}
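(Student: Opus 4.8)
The plan is to split into the two standard cases according to whether the point $t$ is right-scattered or right-dense, exactly mirroring the structure already used in the proof of Proposition~\ref{E1:i}. Since $f(t) = t$ is continuous on all of $\mathbb{T}$, both branches of Theorem~\ref{T1} are available to us at every point.

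First I would handle the right-scattered case. Here $\mu(t) = \sigma(t) - t > 0$, so Theorem~\ref{T1}~(iii) applies directly and gives
\begin{equation*}
f^{(\alpha)}(t) = \frac{f(\sigma(t)) - f(t)}{(\mu(t))^{\alpha}} = \frac{\sigma(t) - t}{(\mu(t))^{\alpha}} = \frac{\mu(t)}{(\mu(t))^{\alpha}} = (\mu(t))^{1 - \alpha}.
\end{equation*}
This covers every $\alpha \in ]0,1]$ at once, including $\alpha = 1$ (where it yields $(\mu(t))^0 = 1$), so the right-scattered case needs no sub-branching.

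Next I would treat the right-dense case, where $\mu(t) = 0$. By Theorem~\ref{T1}~(iv) and~(v), the fractional derivative is computed as the limit
\begin{equation*}
f^{(\alpha)}(t) = \lim_{s \to t} \frac{f(t) - f(s)}{(t - s)^{\alpha}} = \lim_{s \to t} \frac{t - s}{(t - s)^{\alpha}} = \lim_{s \to t} (t - s)^{1 - \alpha}
\end{equation*}
(with the one-sided limit $s \to t^-$ in case~(v), which makes no difference to the outcome). For $\alpha = 1$ this limit is identically $1$, matching the second case of the claimed formula. For $\alpha \neq 1$ the exponent $1 - \alpha$ is positive, so $(t-s)^{1-\alpha} \to 0$ as $s \to t$; since $\mu(t) = 0$ at a right-dense point, this $0$ is precisely $(\mu(t))^{1-\alpha}$, again agreeing with the stated formula.

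The anticipated obstacle is cosmetic rather than mathematical: I must verify that the single formula $(\mu(t))^{1-\alpha}$ correctly unifies both cases, i.e. that at right-dense points the limit value $0$ really equals $(\mu(t))^{1-\alpha} = 0^{1-\alpha}$ when $\alpha \neq 1$, and that the $\alpha = 1$ value $1$ is reported separately because $0^{0}$ is not meaningfully $(\mu(t))^{1-\alpha}$ there. One should also take care with the sign and well-definedness of the fractional power $(t-s)^{\alpha}$ for $\alpha$ of the form $1/q$ with $q$ odd, where negative bases are admissible; but because the quotient $(t-s)/(t-s)^{\alpha} = (t-s)^{1-\alpha}$ tends to $0$ regardless of the side from which $s$ approaches $t$, no difficulty arises. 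Assembling these observations yields the piecewise formula as stated.
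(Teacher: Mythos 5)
Your proof is correct and follows essentially the same route as the paper: a case split on right-scattered versus right-dense points, with the right-dense case handled identically via Theorem~\ref{T1} (iv)--(v) and the observation that $(t-s)^{1-\alpha}\to 0$ when $\alpha\neq 1$. The only difference is in the right-scattered case, where you invoke Theorem~\ref{T1} (iii) directly (using continuity of $f(t)=t$) while the paper instead applies Theorem~\ref{T1} (vi) to get $\mu(t)=(\mu(t))^{\alpha}f^{(\alpha)}(t)$ and divides; your variant is in fact slightly tighter, since (iii) also yields the \emph{existence} of $f^{(\alpha)}(t)$, whereas (vi) presupposes fractional differentiability.
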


\begin{proof}
From Theorem~\ref{T1} (vi) it follows that
$\sigma(t) = t + (\mu(t))^{\alpha} f^{(\alpha)}(t)$, that is,
$\mu(t) = (\mu(t))^{\alpha} f^{(\alpha)}(t)$.
If $\mu(t) \ne 0$, then $f^{(\alpha)}(t)=(\mu(t))^{1-\alpha}$
and the desired relation is proved. Assume now that
$\mu(t) = 0$, that is, $\sigma(t) = t$. In this case
$t$ is right-dense and by Theorem~\ref{T1} (iv) and (v) it follows that
$$
f^{(\alpha)}(t) = \lim_{s \rightarrow t}\frac{t-s}{(t-s)^{\alpha}}.
$$
Therefore, if $\alpha =1$, then $f^{(\alpha )}(t)=1$;
if $0<\alpha <1$, then $f^{(\alpha )}(t)=0$.
The proof is complete.
\end{proof}

Let us consider now the two classical cases $\mathbb{T}=\mathbb{R}$
and $\mathbb{T}= h \mathbb{Z}$, $h > 0$.

\begin{corollary}
Function $f :\mathbb{R} \rightarrow \mathbb{R}$ is fractional differentiable
of order $\alpha$ at point $t \in \mathbb{R}$ if, and only if, the limit
$$
\lim_{s\rightarrow t}\frac{f(t)-f(s)}{(t-s)^{\alpha}}
$$
exists as a finite number. In this case,
\begin{equation}
\label{KG:der}
f^{(\alpha)}(t)=\lim_{s\rightarrow t}\frac{f(t)-f(s)}{(t-s)^{\alpha}}.
\end{equation}
\end{corollary}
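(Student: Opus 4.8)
The plan is to reduce everything to Theorem~\ref{T1}, since for $\mathbb{T}=\mathbb{R}$ every point is right-dense and none is right-scattered. First I would record the two structural facts about the real line viewed as a time scale: because $\mathbb{R}$ has no maximum we have $\mathbb{T}^{\kappa}=\mathbb{R}$, so the fractional derivative is meaningful at every $t$; and because $\sigma(t)=\inf\{s\in\mathbb{R}:s>t\}=t$ for every $t$, while $t<\sup\mathbb{R}=+\infty$, every $t\in\mathbb{R}$ is right-dense. Thus only the right-dense parts of Theorem~\ref{T1}, namely (iv) and (v), can ever be invoked, and the right-scattered formula (iii) never applies.

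Next I would split according to the dichotomy already present in Definition~\ref{def:fd:ts}. If $\alpha\in]0,1]\cap\{1/q:q\text{ is a odd number}\}$, then Theorem~\ref{T1}(iv) applies verbatim at the right-dense point $t$ and yields both the equivalence with the existence of the finite two-sided limit and the formula~\eqref{KG:der}. For this family of exponents the power $(t-s)^{\alpha}$ is real for $s$ on both sides of $t$ (odd roots of negative numbers being real), so the two-sided limit in the statement is genuinely two-sided and no further comment is needed.

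If instead $\alpha\in]0,1]\setminus\{1/q:q\text{ is a odd number}\}$, I would apply Theorem~\ref{T1}(v), which characterises fractional differentiability through the finiteness of the left limit $\lim_{s\to t^{-}}\frac{f(t)-f(s)}{(t-s)^{\alpha}}$ and gives $f^{(\alpha)}(t)$ as that limit. The only genuinely delicate step is to explain why this matches the two-sided expression in~\eqref{KG:der}. The resolution is that for such $\alpha$ the quantity $(t-s)^{\alpha}$ is not real when $s>t$, so the difference quotient is only defined on $\{s<t\}$; consequently the symbol $\lim_{s\to t}$ in~\eqref{KG:der} must be read as a limit taken over the natural domain of the quotient, on which it coincides with $\lim_{s\to t^{-}}$. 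Once this is noted, (v) delivers exactly the claimed equivalence and formula.

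Finally I would observe that the two cases are exhaustive and together cover all $\alpha\in]0,1]$, so the corollary follows in full. I expect the whole argument to be short, with the reconciliation of the one-sided and two-sided limits being the single point requiring care.
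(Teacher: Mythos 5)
Your proof is correct and follows essentially the same route as the paper: both reduce the claim to Theorem~\ref{T1}~(iv) and (v) after observing that every point of $\mathbb{R}$ is right-dense, and both note that for $\alpha \in \left]0,1\right] \setminus \{1/q : q \text{ odd}\}$ the limit in \eqref{KG:der} can only be interpreted as a left-sided limit. Your treatment is simply a more explicit write-up of the paper's brief argument, so nothing further is needed.
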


\begin{proof}
Here $\mathbb{T}=\mathbb{R}$ and all points are right-dense.
The result follows from Theorem~\ref{T1} (iv) and (v).
Note that if $\alpha \in ]0,1] \setminus
\left\{ \frac{1}{q}:q\text{ is a odd number}\right\}$,
then the limit only makes sense as a left-side limit.
\end{proof}

\begin{remark}
The definition \eqref{KG:der} corresponds to the well-known
Kolwankar--Gangal approach to fractional calculus \cite{K:G:96,Wang:FDA12}.
\end{remark}

\begin{corollary}
Let $h > 0$. If $f :h\mathbb{Z} \rightarrow \mathbb{R}$, then
$f$ is fractional differentiable of order $\alpha$ at $t\in h\mathbb{Z}$ with
$$
f^{(\alpha)}(t) =\frac{f(t+h)-f(t)}{h^\alpha}.
$$
\end{corollary}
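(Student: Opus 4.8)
The plan is to reduce the statement entirely to Theorem~\ref{T1}~(iii), exploiting the fact that every point of $h\mathbb{Z}$ is isolated. First I would compute the forward jump operator and the graininess explicitly. For $t \in h\mathbb{Z}$, the set $\{s \in h\mathbb{Z} : s > t\}$ has infimum $t+h$, so $\sigma(t) = t+h$ and hence $\mu(t) = \sigma(t) - t = h > 0$. In particular, every point of $h\mathbb{Z}$ is right-scattered, and since $h\mathbb{Z}$ has neither a maximum nor a minimum, there is no left-scattered maximum to delete, so $\mathbb{T}^{\kappa} = h\mathbb{Z}$ and every $t \in h\mathbb{Z}$ is an admissible point.

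Next I would invoke the observation recorded in the remark following the proof of Theorem~\ref{T1}: because $h\mathbb{Z}$ carries the subspace topology inherited from $\mathbb{R}$ and each of its points is isolated, any function $f : h\mathbb{Z} \to \mathbb{R}$ is automatically continuous at every $t \in h\mathbb{Z}$. With continuity at $t$ and right-scatteredness of $t$ both in hand, the hypotheses of Theorem~\ref{T1}~(iii) are satisfied for any $\alpha \in \,]0,1]$.

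Applying Theorem~\ref{T1}~(iii) then yields fractional differentiability of order $\alpha$ at $t$ together with the formula
\begin{equation*}
f^{(\alpha)}(t) = \frac{f^{\sigma}(t) - f(t)}{(\mu(t))^{\alpha}}.
\end{equation*}
Substituting $f^{\sigma}(t) = f(\sigma(t)) = f(t+h)$ and $\mu(t) = h$ gives exactly $f^{(\alpha)}(t) = \bigl(f(t+h) - f(t)\bigr)/h^{\alpha}$, as claimed. I do not expect any genuine obstacle in this argument: the only points needing verification are the elementary graininess computation $\mu(t) = h$ and the automatic continuity of $f$ at isolated points, after which the result is a direct specialization of part (iii) and holds uniformly for every $t \in h\mathbb{Z}$.
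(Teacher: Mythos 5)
Your proposal is correct and follows essentially the same route as the paper: the paper's own proof simply notes that all points of $h\mathbb{Z}$ are right-scattered and invokes Theorem~\ref{T1}~(iii). You merely make explicit the details the paper leaves implicit (the computation $\sigma(t)=t+h$, $\mu(t)=h$, and the automatic continuity of $f$ at isolated points needed to satisfy the continuity hypothesis of part~(iii)), which is a welcome but not substantively different elaboration.
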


\begin{proof}
Here $\mathbb{T}=h\mathbb{Z}$
and all points are right-scattered.
The result follows from Theorem~\ref{T1} (iii).
\end{proof}

We now give an example using a more sophisticated time scale: the Cantor set.

\begin{example}
Let $\mathbb{T}$ be the Cantor set. It is known (see Example~1.47 of \cite{BP})
that $\mathbb{T}$ does not contain any isolated point, and that
$$
\sigma(t) =
\begin{cases}
t + \frac{1}{3^{m+1}} & \text{ if } t \in L,\\
t  & \text{ if } t \in \mathbb{T} \setminus L,
\end{cases}
$$
where
$$
L = \left\{\sum_{k=1}^{m} \frac{a_k}{3^k} + \frac{1}{3^{m+1}} : m \in \mathbb{N} \text{ and }
a_k \in \{0, 2\} \text{ for all } 1 \le k \le m\right\}.
$$
Thus,
$$
\mu(t) =
\begin{cases}
\frac{1}{3^{m+1}} & \text{ if } t \in L,\\
0  & \text{ if } t \in \mathbb{T} \setminus L.
\end{cases}
$$
Let $f : \mathbb{T} \rightarrow \mathbb{R}$ be continuous and $\alpha \in ]0,1]$.
It follows from Theorem~\ref{T1} that the fractional derivative of order
$\alpha$ of a function $f$ defined on the Cantor set is given by
$$
f^{(\alpha)}(t) =
\begin{cases}
\left[f\left(t + \frac{1}{3^{m+1}}\right)-f(t)\right]3^{(m+1)\alpha} & \text{ if } t \in L,\\[0.3cm]
\displaystyle \lim_{s \rightsquigarrow t}  \frac{f(t)-f(s)}{(t-s)^\alpha}
& \text{ if } t \in \mathbb{T} \setminus L,
\end{cases}
$$
where $\lim_{s\rightsquigarrow t} = \lim_{s \rightarrow t}$ if $\alpha = \frac{1}{q}$ with $q$ an odd number,
and $\lim_{s\rightsquigarrow t} = \lim_{s \rightarrow t^{-}}$ otherwise.
\end{example}

For the fractional derivative on time scales to be useful,
we would like to know formulas for the
derivatives of sums, products and quotients
of fractional differentiable functions. This is
done according to the following theorem.

\begin{theorem}
\label{T2}
Assume  $f, g : \mathbb{T} \rightarrow \mathbb{R}$
are fractional differentiable of order $\alpha$ at
$t \in \mathbb{T}^{\kappa}$. Then,
\begin{description}
\item[(i)] the sum $f+g:\mathbb{T}\rightarrow \mathbb{R}$
is fractional differentiable at $t$ with
$(f+g)^{(\alpha)}(t)=f^{(\alpha)}(t)+g^{(\alpha)}(t)$;

\item[(ii)] for any constant $\lambda$, $\lambda f :\mathbb{T}\rightarrow \mathbb{R}$
is fractional differentiable at $t$ with
$(\lambda f)^{(\alpha)}(t)=\lambda f^{(\alpha)}(t)$;

\item[(iii)] if $f$ and $g$ are continuous, then
the product $f g :\mathbb{T}\rightarrow \mathbb{R}$
is fractional differentiable at $t$ with
\begin{equation*}
\begin{split}
(fg)^{(\alpha)}(t)
&=f^{(\alpha)}(t)g(t)+f(\sigma(t))g^{(\alpha)}(t)\\
&= f^{(\alpha)}(t)g(\sigma(t)) + f(t)g^{(\alpha)}(t);
\end{split}
\end{equation*}

\item[(iv)] if $f$ is continuous and $f(t)f(\sigma(t))\neq 0$,
then $\frac{1}{f}$ is fractional differentiable at $t$ with
$$
\left(\frac{1}{f}\right)^{(\alpha)}(t)
= -\frac{f^{(\alpha)}(t)}{f(t)f(\sigma(t))};
$$

\item[(v)] if $f$ and $g$ are continuous and $g(t)g(\sigma(t))\neq 0$,
then $\frac{f}{g}$ is fractional differentiable at $t$ with
$$
\left(\frac{f}{g}\right)^{(\alpha)}(t)
=\frac{f^{(\alpha)}(t)g(t)-f(t)g^{(\alpha)}(t)}{g(t)g(\sigma(t))}.
$$
\end{description}
\end{theorem}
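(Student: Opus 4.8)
The plan is to reduce every assertion to the characterizations of the fractional derivative already established in Theorem~\ref{T1}, rather than working directly from the $\epsilon$–$\delta$ definition. The key observation is that Theorem~\ref{T1}(iii)–(v) gives explicit limit formulas for $f^{(\alpha)}(t)$ in each of the two cases (right-scattered vs.\ right-dense), and Theorem~\ref{T1}(vi) supplies the universally valid identity $f(\sigma(t)) = f(t) + (\mu(t))^{\alpha} f^{(\alpha)}(t)$. For items~(i) and~(ii), the linearity of the limit (in the right-dense case) and of the difference quotient $\frac{f^{\sigma}(t)-f(t)}{(\mu(t))^{\alpha}}$ (in the right-scattered case) makes both claims essentially immediate; I would simply verify that the candidate value satisfies the defining inequality in Definition~\ref{def:fd:ts}, splitting into the two cases according to whether $t$ is right-scattered or right-dense.

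The substantive item is the product rule~(iii). First I would dispose of the right-scattered case: here $f,g$ continuous forces fractional differentiability by Theorem~\ref{T1}(iii), and I would compute $(fg)^{(\alpha)}(t) = \frac{f^{\sigma}(t)g^{\sigma}(t)-f(t)g(t)}{(\mu(t))^{\alpha}}$ directly, inserting $\pm f^{\sigma}(t)g(t)$ (or $\pm f(t)g^{\sigma}(t)$) to split the numerator and recover the two stated symmetric forms via Theorem~\ref{T1}(iii) again. For the right-dense case I would start from the defining inequality and the estimate
\begin{equation*}
\bigl[f(\sigma(t))g(\sigma(t)) - f(s)g(s)\bigr]
= \bigl[f(\sigma(t))-f(s)\bigr]g(\sigma(t)) + f(s)\bigl[g(\sigma(t))-g(s)\bigr],
\end{equation*}
add and subtract the target term $\bigl(f^{(\alpha)}(t)g(t)+f(\sigma(t))g^{(\alpha)}(t)\bigr)(\sigma(t)-s)^{\alpha}$, and bound each resulting piece using the fractional differentiability of $f$ and $g$ separately together with the continuity of $g$ (which controls $g(\sigma(t))$ and lets $f(s)\to f(t)$). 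The two algebraically distinct groupings of the cross term yield the two displayed expressions.

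Finally, (iv) and (v) follow formally from (iii). For the reciprocal~(iv), I would apply the product rule to the identity $f \cdot \frac{1}{f} = 1$, use Proposition~\ref{E1:i} to conclude that the left-hand side has vanishing fractional derivative, and solve for $\left(\frac{1}{f}\right)^{(\alpha)}(t)$, the hypothesis $f(t)f(\sigma(t))\neq 0$ guaranteeing that every denominator that appears is nonzero; one must also check that $\frac{1}{f}$ is itself continuous at $t$ so that (iii) is applicable, which follows from the continuity of $f$. Then (v) is obtained by writing $\frac{f}{g} = f \cdot \frac{1}{g}$ and combining (iii) with (iv), simplifying over the common denominator $g(t)g(\sigma(t))$. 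The main obstacle I anticipate is the bookkeeping in the right-dense product rule: one has to be careful that the limits defining $f^{(\alpha)}$ and $g^{(\alpha)}$ are taken in the same mode (two-sided or left-sided) dictated by whether $\alpha$ is the reciprocal of an odd number, and that the continuity hypothesis is genuinely used to replace $f(s)$ by $f(t)$ uniformly in the error estimate; keeping the neighborhood $\mathcal{U}$ (resp.\ $\mathcal{U}^{-}$) consistent across both factors is where the argument is most delicate.
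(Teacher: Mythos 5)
Your plan is correct and follows the paper's own proof essentially step for step: verification from Definition~\ref{def:fd:ts} for (i)--(ii), a right-scattered/right-dense case split via Theorem~\ref{T1} for the product rule (iii), and the algebraic reductions $f\cdot\frac{1}{f}=1$ together with Proposition~\ref{E1:i} for (iv), then $\frac{f}{g}=f\cdot\frac{1}{g}$ for (v). The only differences are cosmetic---in the right-dense case of (iii) the paper invokes the limit characterization of Theorem~\ref{T1}(iv)--(v) where you rework the same estimate at the level of the defining inequality---and you share with the paper the tacit step of applying (iii) to the pair $\left(f,\frac{1}{f}\right)$ without first establishing that $\frac{1}{f}$ is itself fractional differentiable at $t$, which is part of the standing hypothesis of (iii).
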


\begin{proof}
Let us consider that $\alpha \in ]0,1]\cap \left\{ \frac{1}{q} : q
\text{ is a odd number}\right\}$. The proofs for the case
$\alpha \in ]0,1] \setminus \left\{\frac{1}{q} : q \text{ is a odd number}\right \}$
are similar: one just needs to choose the proper left-sided neighborhoods.
Assume that $f$ and $g$ are fractional differentiable at $t \in\mathbb{T}^{\kappa}$.
$(i)$ Let $\epsilon > 0$. Then there exist neighborhoods
$\mathcal{U}_{1}$ and $\mathcal{U}_{2}$ of $t$ for which
\begin{equation*}
\left|f(\sigma(t))-f(s)-f^{(\alpha)}(t)[\sigma(t)-s]^{\alpha}\right|
\leq \frac{\epsilon}{2}|\sigma(t)-s|^{\alpha}~~for ~all~~s\in \mathcal{U}_{1}
\end{equation*}
and
\begin{equation*}
\left|g(\sigma(t))-g(s)-g^{(\alpha)}(t)[\sigma(t)-s]^{\alpha}\right|
\leq \frac{\epsilon}{2}|\sigma(t)-s|^{\alpha}~~for ~all~~s\in \mathcal{U}_{2}.
\end{equation*}
Let $\mathcal{U}=\mathcal{U}_{1}\cap \mathcal{U}_{2}$. Then
\begin{equation*}
\begin{split}
\biggl|(f&+g)(\sigma(t))-(f+g)(s)-\left[f^{(\alpha)}(t)
+g^{(\alpha)}(t)\right](\sigma(t)-s)^{\alpha}\biggr|\\
&=\left|f(\sigma(t))-f(s)-f^{(\alpha)}(t)[\sigma(t)-s]^{\alpha}
+g(\sigma(t))-g(s)-g^{(\alpha)}(t)[\sigma(t)-s]^{\alpha}\right|\\
&\leq \left|f(\sigma(t))-f(s)-f^{(\alpha)}(t)[\sigma(t)-s]^{\alpha}\right|
+\left|g(\sigma(t))-g(s)-g^{(\alpha)}(t)[\sigma(t)-s]^{\alpha}\right|\\
&\leq \frac{\epsilon}{2}|\sigma(t)-s|^{\alpha}+\frac{\epsilon}{2}|\sigma(t)-s|^{\alpha}
=\epsilon |\sigma(t)-s|^{\alpha}
\end{split}
\end{equation*}
for all $s\in \mathcal{U}$. Therefore, $f+g$ is fractional differentiable at $t$ and
$(f+g)^{(\alpha)}(t)=f^{\alpha}(t)+g^{(\alpha)}(t)$.
$(ii)$ Let $\epsilon > 0$. Then there exists
a neighborhood $\mathcal{U}$ of $t$ with
\begin{equation*}
\left|f(\sigma(t))-f(s)-f^{(\alpha)}(t)[\sigma(t)-s]^{\alpha}\right|
\leq \epsilon|\sigma(t)-s|^{\alpha} \text{ for all } s\in \mathcal{U}.
\end{equation*}
It follows that
\begin{equation*}
\left|(\lambda f)(\sigma(t))-(\lambda f)(s)
-\lambda f^{(\alpha)}(t)[\sigma(t)-s]^{\alpha}\right|
\leq  \epsilon |\lambda| \, |\sigma(t)-s|^{\alpha} \text{ for  all } s \in \mathcal{U}.
\end{equation*}
Therefore, $\lambda f$ is fractional differentiable at $t$ and
$(\lambda f)^{\alpha}=\lambda f^{(\alpha)}$ holds at $t$.
$(iii)$ If $t$ is right-dense, then
\begin{equation*}
\begin{split}
(fg)^{(\alpha )}(t)
&=\lim_{s\rightarrow t}\frac{\left( fg\right)
(t)-\left( fg\right) (s)}{(t-s)^{\alpha }} \\
&=\lim_{s\rightarrow t}\frac{f(t)-f(s)}{(t-s)^{\alpha }}g\left( t\right)
+\lim_{s\rightarrow t}\frac{g(t)-g(s)}{(t-s)^{\alpha }}f\left( s\right)\\
&= f^{(\alpha )}(t)g(t)+g^{(\alpha )}(t)f(t) \\
&= f^{(\alpha )}(t)g(t)+f(\sigma (t))g^{(\alpha )}(t).
\end{split}
\end{equation*}
If $t$ is right-scattered, then
\begin{equation*}
\begin{split}
\left( fg\right)^{(\alpha )}(t)
&= \frac{\left( fg\right)^{\sigma}(t)
-\left( fg\right) (t)}{(\mu (t))^{\alpha }} \\
&=\frac{f^{\sigma }(t)-f(t)}{(\mu (t))^{\alpha }}g\left( t\right) +\frac{
g^{\sigma }(t)-g(t)}{(\mu (t))^{\alpha }}f^{\sigma }(t)\\
&=f^{(\alpha )}(t)g(t)+f(\sigma (t))g^{(\alpha )}(t).
\end{split}
\end{equation*}
The other product rule formula follows by interchanging in
$\left( fg\right)^{(\alpha )}(t)=f^{(\alpha )}(t)g(t)+f(\sigma (t))g^{(\alpha )}(t)$
the functions $f$ and $g$.
$(iv)$ We use the fractional derivative of a constant (Proposition~\ref{E1:i})
and Theorem~\ref{T2} $(iii)$ just proved: from Proposition~\ref{E1:i} we know that
$$
\left(f \cdot \frac{1}{f}\right)^{(\alpha)}(t)=(1)^{(\alpha)}(t)=0
$$
and, therefore, by (iii)
$$
\left(\frac{1}{f}\right)^{(\alpha)}(t)f(\sigma(t))
+f^{(\alpha)}(t)\frac{1}{f(t)}=0.
$$
Since we are assuming $f(\sigma(t))\neq 0$,
\begin{equation*}
\left(\frac{1}{f}\right)^{(\alpha)}(t)
=-\frac{f^{(\alpha)}(t)}{f(t)f(\sigma(t))}.
\end{equation*}
For the quotient formula $(v)$, we use $(ii)$ and $(iv)$ to calculate
\begin{equation*}
\begin{split}
\left(\frac{f}{g}\right)^{(\alpha)}(t)&=\left(f \cdot \frac{1}{g}\right)^{(\alpha)}(t)\\
&=f(t)\left(\frac{1}{g}\right)^{(\alpha)}(t)+f^{(\alpha)}(t)\frac{1}{g(\sigma(t))}\\
&=-f(t)\frac{g^{(\alpha)}(t)}{g(t)g(\sigma(t))}+f^{(\alpha)}(t)\frac{1}{g(\sigma(t))}\\
&=\frac{f^{(\alpha)}(t)g(t)-f(t)g^{(\alpha)}(t)}{g(t)g(\sigma(t))}.
\end{split}
\end{equation*}
This concludes the proof.
\end{proof}

The following theorem is proved in \cite{BP} for $\alpha = 1$.
Here we show its validity for $\alpha \in \left] 0,1\right[$.

\begin{theorem}
\label{thm:der:pf}
Let $c$ be a constant, $m \in \mathbb{N}$, and $\alpha \in \left] 0,1\right[$.
\begin{description}
\item[(i)] If $f(t) =(t-c)^{m}$, then
$$
f^{(\alpha)}(t)=(\mu(t))^{1-\alpha}
\sum_{\nu = 0}^{m-1}\left(\sigma(t)-c\right)^{\nu}(t-c)^{m-1-\nu}.
$$
\item[(ii)] If $g(t)=\frac{1}{(t-c)^{m}}$, then
$$
g^{(\alpha)}(t)=-(\mu(t))^{1-\alpha}
\sum_{\nu = 0}^{m-1}\frac{1}{(\sigma(t)-c)^{m-\nu}(t-c)^{\nu+1}},
$$
provided $(t-c)\left(\sigma(t)-c\right) \neq 0$.
\end{description}
\end{theorem}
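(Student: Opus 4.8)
The plan is to prove (i) by induction on $m$ using the product rule, and then to obtain (ii) as an immediate consequence of (i) together with the reciprocal rule of Theorem~\ref{T2}.

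For (i), the base case $m=1$ is $f(t)=t-c$. Writing $f$ as the difference of the identity function $t\mapsto t$ and the constant $c$, linearity (Theorem~\ref{T2} (i)--(ii)) combined with Propositions~\ref{E1:i} and~\ref{E1:ii} gives $f^{(\alpha)}(t)=(\mu(t))^{1-\alpha}$ (here we use $\alpha\neq 1$), which matches the claimed formula since the sum on the right collapses to the single term $\nu=0$. For the inductive step, I would assume the formula holds for $m$ and factor $(t-c)^{m+1}=(t-c)^{m}\cdot(t-c)$. Applying the product rule in the form $(\varphi\psi)^{(\alpha)}(t)=\varphi^{(\alpha)}(t)\psi(t)+\varphi(\sigma(t))\psi^{(\alpha)}(t)$ with $\varphi(t)=(t-c)^{m}$ and $\psi(t)=t-c$, and substituting the induction hypothesis for $\varphi^{(\alpha)}(t)$, the value $\varphi(\sigma(t))=(\sigma(t)-c)^{m}$, and $\psi^{(\alpha)}(t)=(\mu(t))^{1-\alpha}$, yields
$$
(\mu(t))^{1-\alpha}\left[\sum_{\nu=0}^{m-1}(\sigma(t)-c)^{\nu}(t-c)^{m-\nu}+(\sigma(t)-c)^{m}\right].
$$
The bracket is exactly $\sum_{\nu=0}^{m}(\sigma(t)-c)^{\nu}(t-c)^{m-\nu}$, the final term $(\sigma(t)-c)^{m}$ being precisely the $\nu=m$ summand, which closes the induction.

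For (ii), I would apply Theorem~\ref{T2} (iv) to $g=1/f$ with $f(t)=(t-c)^{m}$; the hypothesis $(t-c)(\sigma(t)-c)\neq 0$ guarantees $f(t)f(\sigma(t))\neq 0$, so the reciprocal rule gives $g^{(\alpha)}(t)=-f^{(\alpha)}(t)/[f(t)f(\sigma(t))]$. Substituting the formula from (i) together with $f(t)f(\sigma(t))=(t-c)^{m}(\sigma(t)-c)^{m}$, each summand becomes $(\sigma(t)-c)^{\nu}(t-c)^{m-1-\nu}/[(t-c)^{m}(\sigma(t)-c)^{m}]$; tracking exponents, the surviving denominator powers are $m-\nu$ for $(\sigma(t)-c)$ and $\nu+1$ for $(t-c)$, producing the claimed expression.

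The argument is essentially routine once the product and reciprocal rules are available; the only point requiring care is the index bookkeeping in the inductive step --- checking that the contribution of $\psi^{(\alpha)}$ supplies exactly the missing $\nu=m$ summand needed to extend the sum from $m-1$ to $m$ --- and the analogous exponent tracking in the simplification for (ii). I do not expect a genuine obstacle, since all the underlying fractional-differentiation machinery (linearity, product rule, reciprocal rule) has already been established in Theorem~\ref{T2} and Propositions~\ref{E1:i} and~\ref{E1:ii}.
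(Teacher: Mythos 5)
Your proposal is correct and follows essentially the same route as the paper's proof: part (i) by induction on $m$ via the base case $f(t)=t-c$ (linearity plus Propositions~\ref{E1:i} and~\ref{E1:ii}) and the product rule of Theorem~\ref{T2}~(iii), and part (ii) by the reciprocal rule of Theorem~\ref{T2}~(iv) applied to the formula from (i). The only cosmetic difference is the order of the two factors in the inductive step, which is immaterial given the two symmetric forms of the product rule.
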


\begin{proof}
We prove the first formula by induction.
If $ m=1$, then $f(t)=t-c$ and $f^{(\alpha)}(t)=(\mu(t))^{1-\alpha}$
holds from Propositions~\ref{E1:i} and \ref{E1:ii} and Theorem~\ref{T2} $(i)$.
Now assume that
$$
f^{(\alpha)}(t)=(\mu(t))^{1-\alpha}
\sum_{\nu = 0}^{m-1}(\sigma(t)-c)^{\nu}(t-c)^{m-1-\nu}
$$
holds for $f(t) =(t-c)^{m}$ and let $F(t)=(t-c)^{m+1}=(t-c)f(t)$.
We use the product rule (Theorem~\ref{T2} $(iii)$) to obtain
\begin{equation*}
\begin{split}
F^{(\alpha)}(t)&=(t-c)^{(\alpha)}f(\sigma(t))+f^{(\alpha)}(t)(t-c)
=(\mu(t))^{1-\alpha}f(\sigma(t))+f^{(\alpha)}(t)(t-c)\\
&=(\mu(t))^{1-\alpha}(\sigma(t)-c)^{m}+(\mu(t))^{1-\alpha}(t)(t-c)
\sum_{\nu = 0}^{m-1}(\sigma(t)-c)^{\nu}(t-c)^{m-1-\nu}\\
&=(\mu(t))^{1-\alpha}\left[( \sigma(t)-c)^{m}
+ \sum_{\nu = 0}^{m-1}(\sigma(t)-c)^{\nu}(t-c)^{m-\nu}\right]\\
&=(\mu(t))^{1-\alpha} \sum_{\nu = 0}^{m}(\sigma(t)-c)^{\nu}(t-c)^{m-\nu}.
\end{split}
\end{equation*}
Hence, by mathematical induction, part $(i)$ holds.
For $g(t)=\frac{1}{(t-c)^{m}}=\frac{1}{f(t)}$,
we apply Theorem~\ref{T2} $(iv)$ to obtain
\begin{equation*}
\begin{split}
g^{(\alpha)}(t)&=-\frac{f^{(\alpha)}(t)}{f(t)f(\sigma(t))}
=-(\mu(t))^{1-\alpha}\frac{\sum_{\nu = 0}^{m-1}(\sigma(t)
-c)^{\nu}(t-c)^{m-1-\nu}}{(t-c)^{m}(\sigma(t)-c)^{m}}\\
&=-(\mu(t))^{1-\alpha}\sum_{\nu = 0}^{m-1}
\frac{1}{(t-c)^{\nu+1}(\sigma(t)-c)^{m-\nu}},
\end{split}
\end{equation*}
provided $(t-c)\left(\sigma(t)-c\right) \neq 0$.
\end{proof}

Let us illustrate Theorem~\ref{thm:der:pf} in special cases.

\begin{example}
\label{ex:17}
Let $\alpha \in \left]0,1\right[$.
\begin{description}
\item[(i)] If $f(t)=t^{2}$,
then $f^{(\alpha)}(t)=(\mu(t))^{1-\alpha} [\sigma(t)+t]$.

\item[(ii)] If $f(t)=t^{3}$,
then $f^{(\alpha)}(t)=(\mu(t))^{1-\alpha} [t^{2}+t\sigma(t)+(\sigma(t))^{2}]$.

\item[(iii)] If $f(t)=\frac{1}{t}$,
then $f^{(\alpha)}(t)= -\frac{(\mu(t))^{1-\alpha}}{t\sigma(t)}$.
\end{description}
\end{example}

From the results already obtained, it is not difficult to see
that the fractional derivative does not satisfy a chain rule
like $(f\circ g)^{(\alpha)}(t)=f^{(\alpha)}(g(t)) g^{(\alpha)}(t)$:

\begin{example}
\label{ex:conterex:cr}
Let $\alpha \in \left]0,1\right[$.
Consider $f(t)=t^{2}$ and  $g(t)=2 t$. Then,
\begin{equation}
\label{eq:ex:cr:1}
(f\circ g)^{(\alpha)}(t) = \left(4 t^2\right)^{(\alpha)}
= 4 (\mu(t))^{1-\alpha} \left(\sigma(t) + t \right)
\end{equation}
while
\begin{equation}
\label{eq:ex:cr:2}
f^{(\alpha)}(g(t)) g^{(\alpha)}(t)
= (\mu(2t))^{1-\alpha} \left(\sigma(2t) + 2t\right) 2 (\mu(t))^{1-\alpha}
\end{equation}
and, for example for $\mathbb{T}=\mathbb{Z}$, it is easy to see that
$(f\circ g)^{(\alpha)}(t) \ne f^{(\alpha)}(g(t)) g^{(\alpha)}(t)$.
\end{example}

Note that when $\alpha = 1$ and $\mathbb{T} = \mathbb{R}$ our
derivative $f^{(\alpha)}$ reduces to the standard derivative $f'$
and, in this case, both expressions \eqref{eq:ex:cr:1}
and \eqref{eq:ex:cr:2} give $8 t$, as expected. In the fractional case $\alpha \in ]0,1[$
we are able to prove the following result, valid for an arbitrary time scale $\mathbb{T}$.

\begin{theorem}[Chain rule]
\label{T3}
Let $\alpha \in \left]0,1\right[$.
Assume $g:\mathbb{R}\rightarrow \mathbb{R}$ is continuous,
$g:\mathbb{T}\rightarrow \mathbb{R}$
is fractional differentiable of order $\alpha$ at $t \in \mathbb{T}^{\kappa}$,
and $f:\mathbb{R}\rightarrow \mathbb{R}$ is continuously differentiable.
Then there exists $c$ in the real interval $[t,\sigma(t)]$ with
\begin{equation}
\label{q1}
(f\circ g)^{(\alpha)}(t)=f'(g(c))g^{(\alpha)}(t).
\end{equation}
\end{theorem}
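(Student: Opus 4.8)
The plan is to mirror the proof of the classical ($\alpha=1$) time-scale chain rule (Theorem~1.90 in \cite{BP}) referenced just before the statement: split into the right-scattered and right-dense cases and, in each, combine the appropriate characterization of $f^{(\alpha)}$ from Theorem~\ref{T1} with the ordinary mean value theorem for the continuously differentiable function $f$.

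First I would treat the case where $t$ is right-scattered, so $\mu(t)>0$. Since $g:\mathbb{R}\to\mathbb{R}$ is continuous, the restriction of $f\circ g$ to $\mathbb{T}$ is continuous at $t$, so Theorem~\ref{T1}~(iii) applies to both $f\circ g$ and $g$, giving $(f\circ g)^{(\alpha)}(t)=\frac{f(g(\sigma(t)))-f(g(t))}{(\mu(t))^{\alpha}}$ and $g^{(\alpha)}(t)=\frac{g(\sigma(t))-g(t)}{(\mu(t))^{\alpha}}$. Applying the ordinary mean value theorem to $f$ on the real interval with endpoints $g(t)$ and $g(\sigma(t))$ yields a value $\xi$ between them with $f(g(\sigma(t)))-f(g(t))=f'(\xi)\bigl(g(\sigma(t))-g(t)\bigr)$. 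The key point is then to promote $\xi$ to a point $c\in[t,\sigma(t)]$: because $g$ is continuous on the real interval $[t,\sigma(t)]$, the intermediate value theorem produces $c\in[t,\sigma(t)]$ with $g(c)=\xi$, whence substituting back gives $(f\circ g)^{(\alpha)}(t)=f'(g(c))\,g^{(\alpha)}(t)$. If $g(\sigma(t))=g(t)$, both sides vanish and one may take $c=t$.

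Next I would handle the case where $t$ is right-dense, so $[t,\sigma(t)]=\{t\}$ and $c=t$ is forced; the goal becomes $(f\circ g)^{(\alpha)}(t)=f'(g(t))\,g^{(\alpha)}(t)$. By Theorem~\ref{T1}~(iv)--(v) it suffices to evaluate $\lim_{s\rightsquigarrow t}\frac{f(g(t))-f(g(s))}{(t-s)^{\alpha}}$, where the limit is two-sided when $\alpha=1/q$ with $q$ odd and one-sided (from the left) otherwise. For each admissible $s$, the mean value theorem gives $\xi_s$ between $g(s)$ and $g(t)$ with $f(g(t))-f(g(s))=f'(\xi_s)\bigl(g(t)-g(s)\bigr)$, so that the difference quotient factors as $f'(\xi_s)\cdot\frac{g(t)-g(s)}{(t-s)^{\alpha}}$. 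As $s\rightsquigarrow t$, continuity of $g$ forces $g(s)\to g(t)$ and hence $\xi_s\to g(t)$ by squeezing; continuity of $f'$ then gives $f'(\xi_s)\to f'(g(t))$, while the remaining factor tends to $g^{(\alpha)}(t)$ by hypothesis. Taking the product of the limits yields the claim and simultaneously establishes existence of $(f\circ g)^{(\alpha)}(t)$.

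The main obstacle I anticipate is not the limit computation itself but the bookkeeping around the mean value theorem when $g(t)=g(s)$ (or $g(\sigma(t))=g(t)$), where the factorization must be read so that both sides are zero rather than dividing by zero; this is handled by choosing $\xi_s=g(t)$ in the degenerate case. The genuinely essential hypothesis, and the place where this time-scale statement departs from the pointwise real-variable chain rule, is that $g$ be continuous on all of $\mathbb{R}$ and not merely on $\mathbb{T}$: it is precisely this that lets the intermediate value theorem place the mean-value point $c$ inside the real interval $[t,\sigma(t)]$ in the right-scattered case.
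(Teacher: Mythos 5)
Your proposal matches the paper's own proof essentially step for step: the same split into right-scattered and right-dense cases, the mean value theorem applied to $f$ in each, the intermediate value theorem (via continuity of $g$ on $\mathbb{R}$) to produce $c\in[t,\sigma(t)]$ in the scattered case, and $\xi_s\to g(t)$ by continuity in the dense case. Your treatment is in fact slightly more careful than the paper's, since you explicitly handle the degenerate factorizations where $g(\sigma(t))=g(t)$ or $g(s)=g(t)$ and the one-sided versus two-sided limit distinction, but the argument is the same.
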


\begin{proof}
Let $t \in \mathbb{T}^{\kappa}$. First we consider
$t$ to be right-scattered. In this case
$$
(f\circ g)^{(\alpha)}(t)
=\frac{f(g(\sigma(t)))-f(g(t))}{(\mu(t))^{(\alpha)}}.
$$
If $g(\sigma(t))= g(t)$, then we get $(f\circ g)^{(\alpha)}(t)=0$
and $g^{(\alpha)}(t)=0$. Therefore, \eqref{q1} holds for any $c$
in the real interval $[t,\sigma(t)]$ and we can assume
$g(\sigma(t)) \neq g(t)$. By the mean value theorem,
\begin{equation*}
\begin{split}
(f\circ g)^{(\alpha)}(t)&=\frac{f(g(\sigma(t)))-f(g(t))}{g(\sigma(t))-g(t)}
\cdot \frac{g(\sigma(t))-g(t)}{(\mu(t))^{(\alpha)}}\\
&=f'(\xi)g^{(\alpha)}(t),
\end{split}
\end{equation*}
where $\xi$ is between $g(t)$ and $g(\sigma(t))$.
Since $g:\mathbb{R}\rightarrow \mathbb{R}$ is continuous, there is a
$c\in[t,\sigma(t)]$ such that $g(c)=\xi$, which gives us the desired result.
Now consider the case when $t$ is right-dense. In this case
\begin{equation*}
\begin{split}
(f\circ g)^{(\alpha)}(t)&=\lim_{s\rightarrow t}\frac{f(g(t))-f(g(s))}{g(t)-g(s)}
\cdot \frac{g(t)-g(s)}{(t-s)^{(\alpha)}}\\
&=\lim_{s\rightarrow t}\left\{f'(\xi_{s}).\frac{g(t)-g(s)}{(t-s)^{(\alpha)}}\right\}
\end{split}
\end{equation*}
by the mean value theorem, where $\xi_{s}$ is between $g(s)$ and $g(t)$.
By the continuity of $g$ we get that $\lim_{s\rightarrow t}\xi_{s}=g(t)$,
which gives us the desired result.
\end{proof}

\begin{example}
Let $\mathbb{T}=\mathbb{Z}$, for which $\sigma(t) = t+1$ and $\mu(t) \equiv 1$,
and consider the same functions of Example~\ref{ex:conterex:cr}:
$f(t)=t^{2}$ and $g(t)=2t$. We can find directly the value $c$,
guaranteed by Theorem~\ref{T3} in the interval $[4,\sigma(4)]=[4,5]$, so that
\begin{equation}
\label{eq:ex:cr:fc}
(f\circ g)^{(\alpha)}(4)=f'(g(c))g^{(\alpha)}(4).
\end{equation}
From \eqref{eq:ex:cr:1} it follows that $(f\circ g)^{(\alpha)}(4)=36$.
Because $g^{(\alpha)}(4)=2$ and $f'(g(c))=4c$, equality \eqref{eq:ex:cr:fc}
simplifies to $36 = 8 c$, and so $c=\frac{9}{2}$.
\end{example}

We end Section~\ref{sub:sec:FD} explaining how to compute fractional derivatives of higher-order.
As usual, we define the derivative of order zero as the identity operator: $f^{(0)} = f$.
\begin{definition}
\label{def:hofd}
Let $\beta$ be a nonnegative real number.
We define the fractional derivative of $f$ of order $\beta$ by
\begin{equation*}
f^{(\beta)}:=\left(f^{\Delta^{N}}\right)^{(\alpha)},
\end{equation*}
where $N := \lfloor \beta \rfloor$ (that is,
$N$ is the integer part of $\beta$) and $\alpha:=\beta - N$.
\end{definition}

Note that the $\alpha$ of Definition~\ref{def:hofd} is in the interval $[0,1]$.
We illustrate Definition~\ref{def:hofd} with some examples.

\begin{example}
If $f(t)=c$ for all $t\in \mathbb{T}$, $c$ a constant,
then $f^{(\beta)}\equiv 0$ for any $\beta \in\mathbb{R}_0^{+}$.
\end{example}

\begin{example}
Let $f(t) = t^2$, $\mathbb{T} = h \mathbb{Z}$, $h > 0$,
and $\beta=1.3$. Then, by Definition~\ref{def:hofd}, we have
$f^{(1.3)}=\left(f^{\Delta}\right)^{(0.3)}$. It follows
from $\sigma(t) = t+h$ that $f^{(1.3)}(t)=(2t + h)^{(0.3)}$.
Proposition~\ref{E1:i} and Theorem~\ref{T2} (i) and (ii) allow us to write that
$f^{(1.3)}(t)= 2 (t)^{(0.3)}$. We conclude from Proposition~\ref{E1:ii}
with $\mu(t) \equiv h$ that $f^{(1.3)}(t) = 2 h^{0.7}$.
\end{example}


\subsection{Fractional Integration}
\label{sub:sec:FI}

The two major ingredients of any calculus
are differentiation and integration.
Now we introduce the fractional integral
on time scales.

\begin{definition}
\label{def:int}
Assume that $f:\mathbb{T}\rightarrow \mathbb{R}$ is a regulated function.
We define the indefinite fractional integral of $f$
of order $\beta$, $0 \leq \beta \leq 1$, by
\begin{equation*}
\int f(t)\Delta^{\beta}t := \left(\int f(t)\Delta t\right)^{(1-\beta)},
\end{equation*}
where $\int f(t)\Delta t$ is the usual indefinite integral of time scales \cite{BP}.
\end{definition}

\begin{remark}
It follows from Definition~\ref{def:int} that $\int f(t)\Delta^{1}t = \int f(t)\Delta t$
and $\int f(t)\Delta^{0}t = f(t)$.
\end{remark}

\begin{definition}
\label{def:intFracCauchy}
Assume $f:\mathbb{T}\rightarrow \mathbb{R}$ is a regulated function. Let
$$
F^{\beta}(t)=\int f(t)\Delta^{\beta} t
$$
denote the indefinite fractional integral
of $f$ of order $\beta$ with $0 \leq \beta \leq 1$.
We define the Cauchy fractional integral by
\begin{equation*}
\int_{a}^{b}f(t)\Delta^{\beta} t := \left. F^{\beta}(t)\right|^b_a
=F^{\beta}(b)-F^{\beta}(a), \quad a,b\in \mathbb{T}.
\end{equation*}
\end{definition}

The next theorem gives some properties of the fractional integral of order $\beta$.

\begin{theorem}
\label{T4}
If $a, b, c \in \mathbb{T}$, $\xi\in\mathbb{R}$,
and $f,g\in \mathcal{C}_{rd}$ with $0\leq \beta\leq 1$, then
\begin{description}
\item[(i)] $\int_{a}^{b}[f(t)+g(t)]\Delta^{\beta} t
= \int_{a}^{b}f(t)\Delta^{\beta} t + \int_{a}^{b}g(t)\Delta^{\beta} t$;

\item[(ii)] $\int_{a}^{b}(\xi f)(t)\Delta^{\beta} t = \xi
\int_{a}^{b}f(t)\Delta^{\beta} t$;

\item[(iii)] $\int_{a}^{b}f(t)\Delta^{\beta} t
= - \int_{b}^{a}f(t)\Delta^{\beta} t$;

\item[(iv)] $\int_{a}^{b}f(t)\Delta^{\beta} t
= \int_{a}^{c}f(t)\Delta^{\beta} t + \int_{c}^{b}f(t)\Delta^{\beta} t$;

\item[(v)] $\int_{a}^{a}f(t)\Delta^{\beta} t = 0$.
\end{description}
\end{theorem}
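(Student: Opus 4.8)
The plan is to reduce every assertion to the definition of the Cauchy fractional integral (Definition~\ref{def:intFracCauchy}) together with linearity of the ordinary time-scale integral and linearity of the fractional derivative (Theorem~\ref{T2}(i)--(ii)). Throughout, I would write $F^{\beta}_f(t) = \left(\int f(t)\Delta t\right)^{(1-\beta)}$ for the indefinite fractional integral of $f$, so that $\int_a^b f(t)\Delta^{\beta}t = F^{\beta}_f(b) - F^{\beta}_f(a)$.

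First I would dispose of (iii), (iv), (v), which are purely formal consequences of the fact that the Cauchy integral is a difference of endpoint values of a single function. Indeed, $F^{\beta}_f(b) - F^{\beta}_f(a) = -\left(F^{\beta}_f(a) - F^{\beta}_f(b)\right)$ gives (iii); inserting $\pm F^{\beta}_f(c)$ gives (iv); and setting $b = a$ gives (v). No analysis is involved at this stage, as these identities hold for any function $F^{\beta}_f$.

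For (i) and (ii) I would lift linearity through the two operations that define the fractional integral. By linearity of the usual indefinite integral on time scales one may take $\int (f+g)\Delta t = \int f\Delta t + \int g\Delta t$ and $\int (\xi f)\Delta t = \xi \int f\Delta t$ as antiderivatives. Applying the fractional derivative of order $1-\beta$ and invoking Theorem~\ref{T2}(i)--(ii), I obtain $F^{\beta}_{f+g} = F^{\beta}_f + F^{\beta}_g$ and $F^{\beta}_{\xi f} = \xi F^{\beta}_f$ as functions on $\mathbb{T}$; evaluating at $b$ and subtracting the value at $a$ then yields (i) and (ii).

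The one delicate point, and where I expect the only genuine care is needed, is well-definedness: the indefinite integral is determined only up to an additive constant, so I must check both that this constant is immaterial and that the antiderivatives actually lie in the domain of Theorem~\ref{T2}. The first is handled either by observing that two choices differ by a constant whose contribution cancels in the difference $F^{\beta}_f(b) - F^{\beta}_f(a)$, or directly by Proposition~\ref{E1:i}, since the fractional derivative of a constant vanishes. For the second, note that $f,g\in\mathcal{C}_{rd}$ guarantees the antiderivatives exist and are continuous; at right-scattered points Theorem~\ref{T1}(iii) makes them fractional differentiable, while at right-dense points one checks that $\frac{G(t)-G(s)}{(t-s)^{1-\beta}} = \frac{G(t)-G(s)}{t-s}(t-s)^{\beta}$ converges (to $0$ when $\beta>0$), so that Theorem~\ref{T2} indeed applies to each antiderivative.
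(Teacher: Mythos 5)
Your proof follows essentially the same route as the paper's: you dispose of (iii)--(v) as purely formal endpoint-difference identities from Definition~\ref{def:intFracCauchy}, and obtain (i)--(ii) by pushing linearity of the delta integral through the fractional derivative via Theorem~\ref{T2}(i)--(ii), exactly as the paper does. Your closing paragraph on well-definedness (independence of the choice of antiderivative, via Proposition~\ref{E1:i}, and fractional differentiability of the antiderivative at right-dense and right-scattered points) is a correct extra check that the paper silently takes for granted, not a divergence in approach.
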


\begin{proof}
The equalities follow from Definition~\ref{def:int} and Definition~\ref{def:intFracCauchy},
analogous properties of the delta integral of time scales, and the properties
of Section~\ref{sub:sec:FD} for the fractional derivative on time scales.
$(i)$ From Definition~\ref{def:intFracCauchy}
\begin{equation*}
\int_{a}^{b}(f+g)(t)\Delta^{\beta} t
= \left. \int \left(f(t) + g(t)\right) \Delta^{\beta} t \right|_a^b
\end{equation*}
and, from Definition~\ref{def:int},
\begin{equation*}
\int_{a}^{b}(f+g)(t)\Delta^{\beta} t
= \left. \left(\int \left(f(t) + g(t)\right) \Delta t\right)^{(1-\beta)} \right|_a^b.
\end{equation*}
It follows from the properties of the delta integral and Theorem~\ref{T2} (i) that
\begin{equation*}
\int_{a}^{b}(f+g)(t)\Delta^{\beta} t
= \left. \left(\int f(t) \Delta t\right)^{(1-\beta)}
+ \left(\int g(t) \Delta t\right)^{(1-\beta)}\right|_a^b.
\end{equation*}
Using again Definition~\ref{def:int} and Definition~\ref{def:intFracCauchy},
we arrive to the intended relation:
\begin{equation*}
\begin{split}
\int_{a}^{b}(f+g)(t)\Delta^{\beta} t
&= \left. \int f(t) \Delta^\beta t + \int g(t) \Delta^\beta t\right|_a^b\\
&= \left. F^\beta(t) + G^\beta(t)\right|_a^b
= F^\beta(b) + G^\beta(b) - F^\beta(a) - G^\beta(a)\\
&= \int_{a}^{b}f(t)\Delta^{\beta} t + \int_{a}^{b}g(t)\Delta^{\beta} t.
\end{split}
\end{equation*}
$(ii)$ From Definition~\ref{def:intFracCauchy} and Definition~\ref{def:int} one has
\begin{equation*}
\int_{a}^{b}(\xi f)(t)\Delta^{\beta} t
=\left. \int (\xi f)(t)\Delta^\beta t\right|_a^b
=\left. \left(\int (\xi f)(t)\Delta t\right)^{(1-\beta)}\right|_a^b.
\end{equation*}
It follows from the properties of the delta integral and Theorem~\ref{T2} (ii) that
\begin{equation*}
\int_{a}^{b}(\xi f)(t)\Delta^{\beta} t
= \left. \xi \left(\int f(t)\Delta t\right)^{(1-\beta)}\right|_a^b.
\end{equation*}
We conclude the proof of (ii) by using again Definition~\ref{def:int}
and Definition~\ref{def:intFracCauchy}:
\begin{equation*}
\begin{split}
\int_{a}^{b}(\xi f)(t)\Delta^{\beta} t
&= \left. \xi \int f(t)\Delta^\beta t\right|_a^b
= \left. \xi F^\beta(t)\right|_a^b
= \xi\left(F^\beta(b)-F^\beta(a)\right)\\
&= \xi \int_a^b f(t) \Delta^\beta t.
\end{split}
\end{equation*}
The last three properties are direct consequences of Definition~\ref{def:intFracCauchy}:\\
$(iii)$
\begin{equation*}
\begin{split}
\int_{a}^{b}f(t)\Delta^{\beta} t
&= F^\beta(b) - F^\beta(a)
= - \left(F^\beta(a)-F^\beta(b)\right)\\
&= -\int_{b}^{a}f(t)\Delta^{\beta} t.
\end{split}
\end{equation*}
$(iv)$
\begin{equation*}
\begin{split}
\int_{a}^{b}f(t)\Delta^{\beta} t
&= F^\beta(b) - F^\beta(a)
= F^\beta(c) - F^\beta(a) + F^\beta(b) - F^\beta(c)\\
&=\int_{a}^{c} f(t)\Delta^{\beta} t + \int_{c}^{b} f(t)\Delta^{\beta} t.
\end{split}
\end{equation*}
$(v)$
\begin{equation*}
\int_{a}^{a}f(t)\Delta^{\beta} t = F^\beta(a) - F^\beta(a) = 0.
\end{equation*}
The proof is complete.
\end{proof}

We end with a simple example of a discrete fractional integral.

\begin{example}
Let $\mathbb{T} = \mathbb{Z}$, $ 0 \le \beta \le 1$, and $f(t) = t$.
Using the fact that in this case
$$
\int t \Delta t = \frac{t^2}{2} + C
$$
with $C$ a constant, we have
$$
\int_{1}^{10} t \, \Delta^\beta t
= \left. \int t \, \Delta^\beta t \right|_{1}^{10}
= \left. \left(\int t \, \Delta t\right)^{(1-\beta)} \right|_{1}^{10}
= \left. \left(\frac{t^2}{2} + C\right)^{(1-\beta)} \right|_{1}^{10}.
$$
It follows from Example~\ref{ex:17} (i) with $\mu(t) \equiv 1$,
Theorem~\ref{T2} (i) and (ii) and Proposition~\ref{E1:i} that
$$
\int_{1}^{10} t \, \Delta^\beta t
= \left. \frac{1}{2} \left(2 t + 1\right) \right|_{1}^{10}
= \frac{21}{2} - \frac{3}{2} = 9.
$$
\end{example}


\section{Conclusion}
\label{sec:Conc}

Fractional calculus, that is, the study of differentiation and integration
of noninteger order, is here extended, via the recent and powerful calculus on time scales,
to include, in a single theory, the discrete fractional difference calculus
and the local continuous fractional differential calculus.
We have only introduced some fundamental concepts
and proved some basic properties, and much remains
to be done in order to develop the theory here initiated:
to prove concatenation properties of derivatives and integrals,
to consider partial fractional operators on time scales,
to introduce a suitable fractional exponential on time scales,
to study boundary value problems for fractional
differential equations on time scales,
to investigate the usefulness of the new fractional calculus
in applications to real world problems where the time scale is partially
continuous and partially discrete with a time-varying graininess function, etc.
We would like also to mention that it is possible
to develop fractional calculi on time scales
in other different directions than the one considered here.
For instance, instead of following the delta approach we have adopted,
one can develop a nabla \cite{Alm:Tor:JVC,naty:NA:2009},
a diamond \cite{Mal:Tor:diamond,Moz}, or a symmetric \cite{MyID:246,MyID:247}
time scale fractional calculus. These and other questions
will be subject of future research.


\section*{Acknowledgments}

This research was initiated while N. Benkhettou was visiting
the Department of Mathematics of University of Aveiro, February and March of 2013.
The hospitality of the host institution and the financial
support of Sidi Bel Abbes University are here gratefully
acknowledged. A. M. C. Brito da Cruz and D. F. M. Torres were supported
by Portuguese funds through the
\emph{Center for Research and Development in Mathematics and Applications} (CIDMA)
and \emph{The Portuguese Foundation for Science and Technology} (FCT),
within project PEst-OE/MAT/UI4106/2014.
The authors are very grateful to three referees
for valuable remarks and comments, which
significantly contributed to the quality of the paper.


\small



\end{document}